\theoremstyle{plain}
  \newtheorem{thm}{Theorem}[section]
  \newtheorem{prop}[thm]{Proposition}
  \newtheorem{lem}[thm]{Lemma}
\theoremstyle{definition}
  \newtheorem{qu}[thm]{Question}
\theoremstyle{remark}
  \newtheorem{rem}[thm]{Remark}
\noindent\makebox[0mm][r]{\rm(\arabic{enumi})}}
\newcommand{\Mon}{\operatorname{Mon}}
\newcommand{\supp}{\operatorname{supp}}
\newcommand{\Proj}{\operatorname{Proj}}
\newcommand{\Log}{\operatorname{Log}}
\def\PP{{\mathbb P}}
\def\GG{{\mathbb G}}
\def\KK{{\mathbb K}}
\def\1{{\mathbf 1}}
\def\a{{\mathbf a}}
\def\gg{{\mathbf g}}
\def\<{{\langle}}
\def\>{{\rangle}}
\newcommand{\excise}[1]{}
\begin{document}

\title{The Gauss Algebra of squarefree Veronese algebras}

\author{Somayeh Bandari}
\address{Department of Mathematics, Buein Zahra Technical University, Buein Zahra, Qazvin, Iran}
\email{somayeh.bandari@yahoo.com and s.bandari@bzte.ac.ir}

\author{Raheleh Jafari}
\address{Mosaheb Institute of Mathematics, Kharazmi University,
    Tehran, Iran}
\email{rjafari@khu.ac.ir}

\makeatletter
\@namedef{subjclassname@2020}{\textup{2020} Mathematics Subject Classification}
\makeatother

\subjclass[2020]{13F65, 13P05, 05E40, 14M25.}

\keywords{Gauss algebra, squarefree Veronese algebra,
polymatroidal ideal}

\maketitle

\begin{abstract}
 We investigate the  Gauss algebra for squarefree Veronese
algebras generated in degree $3$. For small dimensions not exceeding
$7$, we  determine the Gauss algebra by specifying its  generators
and show in particular that it is normal and Cohen-Macaulay.
\end{abstract}

\section{Introduction}

Let $S=\KK[x_1,\dots,x_d]$ be a polynomial ring over $\KK$, where
$\KK$ is a field of characteristic zero,  and
    $\gg=g_1,\dots,g_n$
be a sequence of non-constant homogeneous polynomials of the same
degree in $S$ generating the $\KK$-subalgebra $A=\KK[\gg]\subseteq
S$ of dimension $d$. Then the  {\em Jacobian matrix} of $\gg$,
denoted by $\Theta(\gg)$, is a $n\times d$ matrix of rank $d$,
\cite[Proposition~1.1]{Simis-2003}. In this situation, the
$\KK$-subalgebra generated by the set of $d\times d$ minors of
$\Theta(\gg)$ is called the {\em Gauss algebra} associated with
$\gg$, \cite[Definition~2.1]{BGS}. Note that  the definition does
not depend on the choice of the homogeneous generators of $A$, so we
simply denote the Gauss algebra associated with $\gg$, by $\GG(A)$,
and call it the Gauss algebra of $A$. The Gauss algebra $\GG(A)$ is
isomorphic to the homogeneous coordinate ring of the image of the
Gauss map associated to the rational map
$\PP^{d-1}\longrightarrow\PP^{n-1}$, given by
$\a=(a_1:\cdots:a_d)\mapsto (g_1(\a):\cdots:g_n(\a))$. Moreover,
there exists an injective homomorphism of $\KK$-algebras
$\GG(A)\hookrightarrow  A$ inducing the rational map from $\Proj
(A)$ to its Gauss image, \cite[Corollary~2.4]{BGS}. In light of
these facts, $\GG(A)$ warrants study as an  object of critical
importance.  P.~Brumatti, P.~Gimenez and A.~Simis, made a step
toward this goal, by clarifying the structure of the Gauss algebra
for some known toric algebras  in \cite{BGS}.  If $A$ is a toric
algebra, generated by monomials $\gg = g_1,\dots, g_n$ of degree
$r$, then all minors of $\Theta(\gg)$ are monomials of degree
$(r-1)d$ in $S$. In particular, the Gauss algebra is again a toric
algebra generated by monomials of degree $(r-1)d$.  The second
author, in a joint study with J.~Herzog and A.~Nasrollah Nejad in
\cite{HJN}, described   the generators and the structure of
$\GG(A)$,  when $A$ is a Borel fixed algebra, a squarefree Veronese
algebra generated in degree 2, or the edge ring of a bipartite graph
with at least one loop.

The Gauss algebra associated with a Veronese algebra is again a
Veronese algebra, see \cite[Proposition~3.2]{BGS} or
\cite[Corollary~2.3]{HJN}. The situation for squarefree Veronese
algebra is more complicated.  In \cite{HJN},  a full description of
$\GG(A)$, when $A=\KK[V_{2,d}]$ is a  squarefree Veronese algebra
generated in degree $2$ is given.   Note that for $d\leq3$, the
Gauss algebra is isomorphic to a polynomial ring. Let $\Mon_S(t,r)$
denote the set of all monomials $u$ of degree $r$ in $S$, such that
$|\supp(u)|\geq t$, where $\supp(u)=\{i : x_i |u\}$. The monomial
ideal generated by $\Mon_S(t,r)$ is polymatroidal. In particular,
the $\KK$-algebra $\KK[\Mon_S(t,r)]$ is normal and Cohen-Macaulay,
\cite[Proposition~3.1]{HJN}.

It is shown  in \cite{HJN} that
$\GG(\KK[V_{2,4}])=\KK[\Mon_S(3,4)\setminus\{x_1x_2x_3x_4\}]$ and
for $d\geq 5$, $\GG(\KK[V_{2,d}])=\KK[\Mon_S(3,d)]$. Algebras of
this type may be viewed as the base ring of a polymatroid.  In
particular,  $\GG(\KK[V_{2,d}])$ is a normal and Cohen-Macaulay
domain if $d\geq 5$. However, $\GG(A)$ is not normal for $d=4$.

In this paper, we investigate the  Gauss algebra for squarefree
Veronese algebras generated in degree $3$, describing the generators
in case of $d\leq 7$.  Note that for $d=3,4$,  $\GG(\KK[V_{3,d}])$
is isomorphic to a one dimensional polynomial ring.

For $d=5,6,7$, we show that
$\GG(\KK[V_{3,d}])=\KK[\Mon^*_S(4,2d)\setminus E_d]$, where
$$\Mon^*_S(4,2d)= \{x_1^{a_1}\cdots x_d^{a_d}\in \Mon_S(4,2d)\; ; \; a_i\leq d-2 \;
\text{for}\; 1\leq i\leq
    d\},$$
and
$$E_d=\{u\in \Mon^*_S(4,2d) \ ; \  u=x_{i_1}^{a_1}x_{i_2}^{a_2}x_{i_3}^{a_3}x_{i_4}
\text{ for some } i_1,i_2,i_3,i_4\in[d]
\}.$$

The monomial ideal generated by
$\Mon^*_S(4,2d)\setminus E_d$ is polymatroidal for all $d\geq 5$,
see Theorem~\ref{poly}.  In particular, $\GG(\KK[V_{3,d}])$ is
normal and Cohen-Macaulay. Based on our evidences, we guess
$\KK[\Mon^*_S(4,2d)\setminus E_d$] must define the Gauss algebra
$\GG(\KK[V_{3,d}])$  for all $d\geq5$. We leave the case of higher
dimensions $d\geq 8$, as an open question.

\section{The Gauss algebra of squarefree Veronese algebras}

Let $S=\KK[x_1,\ldots,x_d]$ be a polynomial ring  over $\KK$, where
$\KK$ is a field of characteristic  zero. Let $A=\KK[\gg]$ be a
toric algebra, generated by a sequence of monomials
$\gg=g_1,\ldots,g_n$, where $g_i= {x_1^{a_{1i}}\cdots x_d^{a_{di}}}$
for $i=1,\ldots,n$. We denote the $r$-minor corresponding to the
rows $i_1,\dots,i_r$ and columns $j_1,\dots,j_r$ of a matrix $B$, by
$[i_1,\dots,i_r\ | \ j_1,\ldots,j_r ]_B$. Then
\[x_{j_1}\ldots x_{j_r} [i_1,\ldots,i_r\ | \ j_1,\ldots,j_r ]_{\Theta(\gg)}=g_{i_1}\ldots g_{i_r}[i_1,\ldots,i_r\ | \ j_1,\ldots,j_r ]_{\mathrm{Log}(\gg)},\]
where $\Theta(\gg)$ is the Jacobian matrix of $\gg$ and
 $\mathrm{Log}(\gg)=(a_{ij})$ is the exponent matrix of $\gg$, whose
columns  are the exponents of each monomial in $\gg$. It follows
that the Gauss algebra $\GG(A)$ can be described in terms of
products of $d$ monomials in $\gg$,  as the following  result shows.

\begin{prop}\label{Gauss} \cite[Page 4]{HJN}
Let $A=\KK[\gg]$ be a homogeneous monomial algebra in $S$  with
generators $\gg=g_1,\ldots,g_n$. Then the Gauss algebra of $A$ is
determined as
$$\GG(A)=\KK[\frac{g_{i_1}\cdots g_{i_d}}{x_1\cdots x_d} \ ; \
\det\Log(g_{i_1},\ldots,g_{i_d})\neq 0].$$
\end{prop}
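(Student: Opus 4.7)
The plan is to unwind the definition of $\GG(A)$ and apply the identity displayed immediately before the statement. By definition, $\GG(A)$ is the $\KK$-subalgebra of $S$ generated by all nonzero $d\times d$ minors of the Jacobian matrix $\Theta(\gg)$. Since $\Theta(\gg)$ has exactly $d$ columns, every such minor selects all columns and some $d$-subset of rows $\{i_1,\ldots,i_d\}\subseteq\{1,\ldots,n\}$.

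First, I would specialize the identity
\[
x_{j_1}\cdots x_{j_r}\,[i_1,\ldots,i_r\mid j_1,\ldots,j_r]_{\Theta(\gg)} \;=\; g_{i_1}\cdots g_{i_r}\,[i_1,\ldots,i_r\mid j_1,\ldots,j_r]_{\Log(\gg)}
\]
to $r=d$ and $(j_1,\ldots,j_d)=(1,\ldots,d)$. Dividing through by $x_1\cdots x_d$ (which is legal as an identity in the quotient field of $S$) gives
\[
[i_1,\ldots,i_d\mid 1,\ldots,d]_{\Theta(\gg)} \;=\; \det\Log(g_{i_1},\ldots,g_{i_d})\cdot\frac{g_{i_1}\cdots g_{i_d}}{x_1\cdots x_d}.
\]

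Second, I would read off the following dichotomy: each such minor either vanishes, precisely when $\det\Log(g_{i_1},\ldots,g_{i_d})=0$, or equals a nonzero scalar multiple of the fraction $\tfrac{g_{i_1}\cdots g_{i_d}}{x_1\cdots x_d}$. Because a $\KK$-subalgebra is unchanged when its generators are rescaled by nonzero elements of $\KK$, the family of nonzero $d$-minors and the family $\{\tfrac{g_{i_1}\cdots g_{i_d}}{x_1\cdots x_d}:\det\Log(g_{i_1},\ldots,g_{i_d})\neq 0\}$ generate the same $\KK$-algebra, which is exactly the equality asserted in the proposition.

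There is essentially no obstacle to overcome; the argument is a single computation combined with the definition. The one small point worth recording explicitly is why each fraction $\tfrac{g_{i_1}\cdots g_{i_d}}{x_1\cdots x_d}$ with nonvanishing log-determinant is genuinely an element of $S$ (a monomial), rather than merely a rational function: this is forced by the displayed identity, since the left-hand side is a polynomial in $S$ and the scalar $\det\Log(g_{i_1},\ldots,g_{i_d})$ is a nonzero element of $\KK$. Equivalently, one can argue directly that nonvanishing of the log-determinant forces each variable $x_j$ to divide at least one of $g_{i_1},\ldots,g_{i_d}$, so the division by $x_1\cdots x_d$ produces no poles.
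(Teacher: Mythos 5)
Your argument is correct and is essentially the paper's own: the paper states the identity $x_{j_1}\cdots x_{j_r}\,[i_1,\ldots,i_r\mid j_1,\ldots,j_r]_{\Theta(\gg)}=g_{i_1}\cdots g_{i_r}\,[i_1,\ldots,i_r\mid j_1,\ldots,j_r]_{\Log(\gg)}$ immediately before the proposition and derives the description of $\GG(A)$ from it in exactly the way you do, specializing to $r=d$ and all columns, so the minors are nonzero scalar multiples (using $\charac\KK=0$) of the fractions $g_{i_1}\cdots g_{i_d}/(x_1\cdots x_d)$. Your extra remark on why the fraction lies in $S$ is a welcome explicit touch, but the route is the same.
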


Let $V_{r,d}$ denote  the set of all squarefree monomials of degree
$r$ in $S$. Then the $\KK$-algebra $A=\KK[V_{r,d}]$ is called the
{\em squarefree Veronese algebra} of degree $r$ in $S$.

\begin{rem}

Note that $V_{r,d}$ is a non-empty set  if and only if $r\leq d$.
For $r=d$, it has only one element   $g=x_1\cdots x_d$ which means
that $A=\KK[V_{r,d}]$ is a one dimensional polynomial ring. In the
case that $r=d-1$, $V_{r,d}$  has $d$ elements
$\{g_i=x_1\cdots\hat{x_i}\cdots x_d \;;\; 1\leq i\leq d\}$ and
consequently, $\GG(A)=\KK[\frac{g_1\cdots g_d}{x_1\cdots x_d}]$.
 Therefore,  the Gauss algebra is isomorphic to a one dimensional polynomial
 ring. 
\end{rem}

Denote by $\Mon_S(t,r)$ the set of all monomials $u$ of degree $r$
in $S$ such that $|\supp(u)|\geq t$, where $\supp(u) = \{i ;  x_i |
u\}$.
 By  \cite[Remark 3.4]{HJN}, the Gauss algebra of $A=\KK[V_{r,d}]$ with $d\geq r+2$, is contained in
 $\KK[\Mon_S(r+1,(r-1)d)]$. Indeed,
\begin{equation}\label{GMon}
    \mathbb{G}(A)\subseteq \KK[\Mon_S^*(r+1,(r-1)d)],
\end{equation}
where
$$\Mon_S^*(r+1,(r-1)d)=\{x_1^{a_1}\cdots x_d^{a_d}\in \Mon_S(r
+ 1,(r-1)d)\;;\; a_i\leq d-2 \; \text{for}\; 1\leq i\leq
    d\}.$$
We  define the set
$$E_d=\{u\in \Mon^*_S(4,2d) \ ; \  u=x_{i_1}^{a_1}x_{i_2}^{a_2}x_{i_3}^{a_3}x_{i_4}
    \text{ for some } i_1,i_2,i_3,i_4\in[d]
    \}.$$

\begin{lem}\label{Ed}
 Let  $d\geq 5$. Then
$\GG(\KK[V_{3,d}])\subseteq \KK[\Mon_S^*(4,2d)\setminus E_d]$.
\end{lem}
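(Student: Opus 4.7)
The plan is to argue by contradiction: suppose some generator of $\GG(\KK[V_{3,d}])$ lies in $E_d$, and derive a violation of an exponent count for the product $g_{i_1}\cdots g_{i_d}$. By Proposition~\ref{Gauss}, such a generator has the form
\[
u = \frac{g_{i_1}\cdots g_{i_d}}{x_1\cdots x_d},
\]
where each $g_{i_k} \in V_{3,d}$ and $\det\Log(g_{i_1},\ldots,g_{i_d})\neq 0$; in particular the $g_{i_k}$'s are pairwise distinct. Assume $u = x_{i_1}^{a_1}x_{i_2}^{a_2}x_{i_3}^{a_3}x_{i_4}$ with $a_1,a_2,a_3\geq 1$ and $i_1,i_2,i_3,i_4$ distinct. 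First I would translate this back to the product: the exponent of each $x_j$ in $g_{i_1}\cdots g_{i_d}$ is its exponent in $u$ plus one, so $x_{i_4}$ appears with multiplicity exactly $2$, and each $x_j$ with $j\notin\{i_1,\ldots,i_4\}$ appears exactly once.

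Next I would partition the generators by whether their supports lie in $I:=\{i_1,i_2,i_3,i_4\}$ or not. Writing $J=[d]\setminus I$, any $g_{i_k}$ with $\supp(g_{i_k})\not\subseteq I$ contributes at least one $J$-variable to the product, while the total $J$-multiplicity in $g_{i_1}\cdots g_{i_d}$ is only $|J|=d-4$. Hence at most $d-4$ of the generators can have support meeting $J$, so at least four of them are supported entirely in $I$. Since there are only $\binom{4}{3}=4$ squarefree degree-$3$ monomials supported on $I$ and the $g_{i_k}$'s are pairwise distinct, these four generators must be exactly the full family of squarefree degree-$3$ monomials on $I$.

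Finally, in that family of four monomials each variable $x_{i_k}$ is excluded from precisely one monomial, hence appears in the other three. In particular $x_{i_4}$ already contributes multiplicity $3$ to $g_{i_1}\cdots g_{i_d}$, contradicting the value $2$ established in the first paragraph. Combined with the inclusion (\ref{GMon}), this proves the lemma. I expect the main obstacle to be the pigeonhole step in the middle paragraph---tracking the scarcity of $J$-variables carefully enough to force exactly four of the generators to be supported in $I$---after which the collision on $x_{i_4}$'s multiplicity is automatic. The hypothesis $d\geq 5$ is exactly what is needed to invoke (\ref{GMon}), so it enters the argument only through the ambient containment $\GG(\KK[V_{3,d}])\subseteq \KK[\Mon_S^*(4,2d)]$.
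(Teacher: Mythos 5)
Your proof is correct and takes essentially the same route as the paper: both arguments isolate four generators supported entirely on $\{i_1,i_2,i_3,i_4\}$ (you by counting the total multiplicity $d-4$ of the variables outside the support, the paper by observing that the single-entry rows $R_5,\dots,R_d$ of $\Log(g_1,\dots,g_d)$ must occupy distinct columns) and then derive the same clash. The only difference is cosmetic: the paper exhibits two identical columns $(1,1,1,0,\dots,0)^{T}$ contradicting $\det\Log\neq 0$, whereas you use distinctness of the columns to force all four cubics on the support and contradict the multiplicity $2$ of $x_{i_4}$ --- contrapositive forms of the same count.
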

\begin{proof}
 By (\ref{GMon}), we have  $\GG(\KK[V_{3,d}])\subseteq
\KK[\Mon_S^*(4,2d)]$. Now assume, for the sake of contradiction,
that there exists a monomial
$m=x_{i_1}^{a_1}x_{i_2}^{a_2}x_{i_3}^{a_3}x_{i_4}\in
\GG(\KK[V_{3,d}])\cap E_d$. Then, there exist monomials
$g_1,\dots,g_d\in V_{3,d}$ such that $m=\frac{g_1\dots g_d}{x_1\dots
x_d}$ with $\det\Log(g_1,\dots,g_d)\neq0$. Let us denote  the
columns of $\Log(g_1,\dots,g_d)$ by $C_1,\dots,C_d$ and its rows by
$R_1,\dots,R_d$. After suitable relabeling, we may assume that,
$R_4$ has two non-zero entries and each of $R_5,\dots,R_d$ has one
non-zero entry.

As the rows are different, there are $d-4$ columns, say $C_5,\dots,
C_{d}$ with a non-zero entry at $j$th row, for some $5\leq j \leq
d$, and the rest of $4$ columns, $C_1, \dots, C_4$ have zero entries
on the rows $R_5,\dots,R_d$. Note that $R_4$ has only two non-zero
entries,  so there are  two columns among $C_1,\dots,C_4$, with tree
non-zero elements in the first three rows, which means having  two identical columns, a contradiction.
\end{proof}

\begin{lem}\label{som}
Let $d\in\{5,6,7\}$. Then
    $E_d=\{x_{i_1}^{d-2}x_{i_2}^{a_2}x_{i_3}^{a_3}x_{i_4}
     \ ; \  1\leq a_2,a_3\leq d-2\;,\; a_2+a_3=d+1  \  \text{ and } i_j\in[d]\}$.
\end{lem}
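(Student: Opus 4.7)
\textbf{Proof plan for Lemma~\ref{som}.}

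The plan is to establish the set equality by unpacking the definition of $E_d$ and then performing a short pigeonhole count on the exponents, using the bound $d\leq 7$ in a critical way.

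First I would take an arbitrary $u = x_{i_1}^{a_1}x_{i_2}^{a_2}x_{i_3}^{a_3}x_{i_4}\in E_d$. The definition of $E_d$ together with $u\in \Mon^*_S(4,2d)$ yields four pieces of data: the indices $i_1,\ldots,i_4$ are distinct, the exponents satisfy $a_1,a_2,a_3\geq 1$, each $a_j\leq d-2$, and
\[
a_1+a_2+a_3 \;=\; 2d-1,
\]
since $\deg u = 2d$ and the exponent of $x_{i_4}$ is $1$.

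The key step, and essentially the only substantive one, is to force at least one of the exponents to attain the upper bound $d-2$. If instead every $a_j\leq d-3$, then $a_1+a_2+a_3\leq 3(d-3)=3d-9$, which combined with $a_1+a_2+a_3=2d-1$ would require $d\geq 8$, contradicting $d\in\{5,6,7\}$. Hence, after relabeling, $a_1=d-2$, and the remaining identity $a_2+a_3 = 2d-1-(d-2)=d+1$ follows immediately; the bounds $1\leq a_2,a_3\leq d-2$ are inherited from the general constraints on $E_d$. This gives the forward inclusion.

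For the reverse inclusion I would simply check that any monomial $u=x_{i_1}^{d-2}x_{i_2}^{a_2}x_{i_3}^{a_3}x_{i_4}$ with $1\leq a_2,a_3\leq d-2$ and $a_2+a_3=d+1$ lies in $E_d$: one verifies $\deg u=(d-2)+(d+1)+1=2d$, support size $4$, and every exponent bounded by $d-2$ (noting $d\geq 5$ so that the exponent $1$ of $x_{i_4}$ also satisfies $1\leq d-2$). There is no genuine obstacle here beyond the pigeonhole step; indeed, the content of the lemma is exactly that the numerical constraint $a_1+a_2+a_3=2d-1$ under the ceiling $a_j\leq d-2$ admits no solution with all three exponents strictly below the ceiling once $d\leq 7$, which is precisely what fails for $d\geq 8$ and explains why the lemma is restricted to small $d$.
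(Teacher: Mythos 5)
Your proof is correct and matches the paper's argument exactly: the paper likewise observes that $a_1+a_2+a_3=2d-1$ with all $a_i\leq d-2$, and that assuming all $a_i\leq d-3$ forces $2d-1\leq 3(d-3)$, i.e.\ $d\geq 8$, contradicting $d\in\{5,6,7\}$. The only difference is that you also spell out the (trivial) reverse inclusion, which the paper leaves implicit; that is a harmless addition, not a different approach.
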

\begin{proof}
Let $m=x_{i_1}^{a_1}x_{i_2}^{a_2}x_{i_3}^{a_3}x_{i_4}\in E_d$. Then
$a_i\leq d-2$ for $i=1,2,3$. If $a_i\leq d-3$ for $i=1,2,3$, then
     $2d-1=a_1+a_2+a_3\leq 3(d-3)$, which implies $8\leq d$, a contradiction.
\end{proof}

\begin{lem}\label{supp<d}
 Let $d\geq 6$ and
$\GG(\KK[V_{3,d-1}])=\KK[\Mon_{S'}^*(4,2(d-1))\setminus E_{d-1}]$,
where $S'=\KK[x_{i_1},\ldots,x_{i_{d-1}}]\subset S$. If $m$ is a
monomial in $\KK[\Mon^*_S(4,2d)\setminus E_d]$ with $|\supp(m)|<d$,
then $m\in\GG(\KK[V_{3,d}])$.
\end{lem}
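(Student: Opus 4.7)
The approach is to reduce $m$ to a monomial of smaller degree in $d-1$ variables, apply the inductive hypothesis, and lift the resulting representation back to $V_{3,d}$. After relabeling variables, I would assume $x_d \notin \supp(m)$, so $m$ is a monomial of degree $2d$ sitting in $S' = \KK[x_1, \ldots, x_{d-1}]$.

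The plan is to select indices $p, q \in [d-1]$ with $p \neq q$ and $a_p, a_q \geq 1$ (where $a_i$ denotes the $x_i$-exponent in $m$) so that the quotient $m' := m/(x_p x_q)$ lies in $\Mon^*_{S'}(4, 2(d-1)) \setminus E_{d-1}$. Once $(p,q)$ is fixed, the monomial $m'$ has degree $2(d-1)$, which equals the generating degree of $\GG(\KK[V_{3,d-1}])$, so by the inductive hypothesis and Proposition~\ref{Gauss} it appears as a single generator:
$$m' = \frac{h_1 \cdots h_{d-1}}{x_1 \cdots x_{d-1}}, \qquad h_i \in V_{3,d-1}, \qquad \det \Log_{S'}(h_1, \ldots, h_{d-1}) \neq 0.$$
The lift is then immediate: setting $g_i := h_i$ for $i \in [d-1]$ and $g_d := x_p x_q x_d$, each $g_i$ lies in $V_{3,d}$, and a direct computation gives $(g_1 \cdots g_d)/(x_1 \cdots x_d) = m'\cdot x_p x_q = m$. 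Because the $x_d$-row of $\Log(g_1, \ldots, g_d)$ has a unique nonzero entry (a $1$ in column $d$), expansion along that row yields $\det \Log(g_1, \ldots, g_d) = \pm \det \Log_{S'}(h_1, \ldots, h_{d-1}) \neq 0$, whence $m \in \GG(\KK[V_{3,d}])$.

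The substance of the proof lies therefore in selecting $(p, q)$ so that three conditions hold simultaneously: $|\supp(m')| \geq 4$, every exponent of $m'$ is at most $d - 3$, and $m' \notin E_{d-1}$. For the support condition, when $|\supp(m)| = 4$ the assumption $m \notin E_d$ forces every exponent of $m$ to be $\geq 2$, so dividing by any admissible $x_p x_q$ preserves the full support; when $|\supp(m)| \in \{5, \ldots, d-1\}$ there is ample room to avoid losing too many variables. The exponent bound requires that $\{p, q\}$ include every variable whose exponent in $m$ attains the maximum admissible value $d - 2$; a short counting argument based on $\sum a_i = 2d$ and $|\supp(m)| \geq 4$ shows at most two such variables can exist, so this is always arrangeable. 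The $E_{d-1}$ exclusion can rule out an individual choice of $(p,q)$, but the remaining flexibility among admissible pairs should always allow an alternative.

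The main obstacle I anticipate is the combined case analysis at the boundary, particularly the tight configurations such as $|\supp(m)| = 4$ with exponent pattern $(d-2, d-2, 2, 2)$ and the analogous cases for $|\supp(m)| = d-1$ where two variables carry exponent $d-2$, since there several constraints on $(p, q)$ become simultaneously rigid. The delicate point is to verify that no pathological configuration forces every admissible $(p, q)$ to violate one of the three defining conditions of $\Mon^*_{S'}(4, 2(d-1)) \setminus E_{d-1}$.
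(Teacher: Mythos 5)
Your reduction-and-lifting skeleton coincides exactly with the paper's proof: divide $m$ by $x_px_q$, invoke the hypothesis on $\GG(\KK[V_{3,d-1}])$ (the degree $2(d-1)$ forces $m'$ to be a single generator, so it is a quotient $\frac{h_1\cdots h_{d-1}}{x_1\cdots x_{d-1}}$ with nonzero log-determinant), append $g_d=x_px_qx_d$, and expand the determinant of $\Log(g_1,\ldots,g_d)$ along the $x_d$-row. That part is complete and correct. But the substantive content of the lemma --- exhibiting a pair $(p,q)$ for which $m'=m/(x_px_q)$ actually lands in $\Mon^*_{S'}(4,2(d-1))\setminus E_{d-1}$ --- is exactly what you leave open: you state the three required conditions correctly, but for the $E_{d-1}$-avoidance you only assert that the ``remaining flexibility \ldots should always allow an alternative,'' and you explicitly flag the tight configurations as an unresolved obstacle. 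Since the lemma is precisely this combinatorial claim, that is a genuine gap, not a deferred routine verification.

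The missing idea is a greedy choice that settles all three conditions simultaneously: take $p,q$ so that $a_p,a_q$ are the \emph{two largest} exponents of $m$. Two exponents $\geq 2$ always exist, since otherwise $2d=\sum_i a_i\leq (d-2)+(t-1)\leq 2d-4$ where $t=|\supp(m)|\leq d-1$; hence $a_p,a_q\geq 2$ and $\supp(m')=\supp(m)$, so $|\supp(m')|\geq 4$. For $d\geq 6$ at most two exponents can equal $d-2$ (three would force $\deg m\geq 3(d-2)+1>2d$), and the greedy choice decrements every exponent equal to $d-2$, giving $a_i'\leq d-3$ throughout. Finally, if $m'\in E_{d-1}$, then $|\supp(m)|=4$, and $m\notin E_d$ forces every $a_i\geq 2$; the exponent $1$ in $m'$ must then come from dividing an exponent equal to $2$, and since that exponent is the second largest, all exponents except one equal $2$, whence $2d=a_{\max}+6\leq (d-2)+6$, i.e., $d\leq 4$, contradicting $d\geq 6$. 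In particular the pattern $(d-2,d-2,2,2)$ you singled out as problematic is harmless: dividing by the two largest exponents yields $(d-3,d-3,2,2)\notin E_{d-1}$. Note also that your stated admissibility condition $a_p,a_q\geq 1$ is too weak for your own support argument when $|\supp(m)|=5$ (dividing out a variable with exponent $1$ could drop the support to $4$ and create a new exponent-$1$ variable); the argument needs $a_p,a_q\geq 2$, which the greedy choice supplies automatically.
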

\begin{proof}
Let $|\supp(m)|=t<d$. Without loss of generality, we may assume that
$m=x_1^{a_1}\cdots x_t^{a_t}$. Since $a_1+\cdots+a_t=2d$ and
$a_i\leq d-2$ for all $i\in[t]$,  there exist $1\leq r<s\leq t$ with
$a_r,a_s>1$.  We choose $a_r,a_s$ among the largest  exponents. Let
$u=\frac{m}{x_rx_s}=x_1^{a'_1}\cdots x_t^{a'_t}\in
\Mon_{S'}(4,2(d-1))$, where $S'=\KK[x_1,\ldots,x_{d-1}]$.  Note that
since at most two of powers $a_i$ are equal $d-2$, it follows that
$u\in \Mon^*_{S'}(4,2(d-1))$. If $u\in E_{d-1}$, then $t=4$ and
$2\in\{a_r, a_s\}$. Since $m\not\in E_d$, we may assume that
$a_1=a_r, a_2=a_3=a_4=2$. Note that $a_1\leq d-2$, which means that
$2d=a_1+6\leq d+4$, consequently $d\leq 4$, a contradiction.
Therefore, $u\notin E_{d-1}$. Hence, by assumption, $u\in
\mathbb{G}(\KK[V_{3,d-1}])$. Then $u=\frac{g_1\cdots
g_{d-1}}{x_1\cdots
    x_{d-1}}$ such that  $g_i\in V_{3,d-1}$ and
$\det(\Log(g_1,\ldots,g_{d-1}))\neq 0$. Let $g_d=x_rx_sx_d$. So
$(x_1\cdots x_d)m=x_1\cdots x_dx_rx_su=g_1\cdots g_d$. Since all the
entries of the last row of $\Log(g_1,\ldots,g_d)$ are zero, except
the last one, which is equal to $1$, we see that
$\Log(g_1,\ldots,g_d)$ is non-singular.
\end{proof}

\begin{prop}\label{degree5}
Let $A=\KK[V_{3,5}]$ and $S=\KK[x_1,\ldots,x_5]$. Then
$$\mathbb{G}(A)=\KK[\Mon_S^*(4,10)\setminus\{x_{i_1}^3x_{i_2}^3x_{i_3}^3x_{i_4}\; ; \;
i_j\in[5]\}].$$
\end{prop}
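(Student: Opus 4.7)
The inclusion $\GG(\KK[V_{3,5}]) \subseteq \KK[\Mon_S^*(4,10) \setminus \{x_{i_1}^3 x_{i_2}^3 x_{i_3}^3 x_{i_4}\}]$ is immediate from Lemma~\ref{Ed} together with Lemma~\ref{som}: at $d=5$ the constraint $a_2 + a_3 = 6$ with $1 \leq a_2, a_3 \leq 3$ forces $a_2 = a_3 = 3$, so $E_5$ coincides with the stated exceptional set.

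For the reverse inclusion, Proposition~\ref{Gauss} reduces the task to the following: for every $m \in \Mon_S^*(4,10) \setminus E_5$, exhibit monomials $g_1, \ldots, g_5 \in V_{3,5}$ with $\frac{g_1 \cdots g_5}{x_1 \cdots x_5} = m$ and $\det \Log(g_1, \ldots, g_5) \neq 0$. Equivalently, I need five $3$-subsets of $[5]$ whose characteristic vectors sum to the exponent vector of $m \cdot x_1 \cdots x_5$ and whose incidence matrix is nonsingular. Note that the inductive tool of Lemma~\ref{supp<d} is only available for $d \geq 6$, so the base case $d=5$ must be handled by direct construction.

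A short enumeration of exponent vectors $(a_1, \ldots, a_5)$ with $\sum a_i = 10$, $a_i \leq 3$, $|\supp|\geq 4$, and excluding the type $(3,3,3,1,0)$ of $E_5$, leaves only four types up to permutation: $(3,3,2,2,0)$, $(3,3,2,1,1)$, $(3,2,2,2,1)$, and $(2,2,2,2,2)$. For each I propose an explicit family of triples: all four $3$-subsets of $\{1,2,3,4\}$ together with $\{1,2,5\}$ for type $(3,3,2,2,0)$; $\{1,2,3\}, \{1,2,4\}, \{1,2,5\}, \{1,3,4\}, \{2,3,5\}$ for type $(3,3,2,1,1)$; $\{1,2,3\}, \{1,2,4\}, \{1,3,4\}, \{1,2,5\}, \{3,4,5\}$ for type $(3,2,2,2,1)$; and the cyclic family $T_i = [5] \setminus \{i, i+1\}$ (indices mod $5$) for type $(2,2,2,2,2)$. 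In each case, checking that the characteristic vectors sum to the exponent vector of $m \cdot x_1 \cdots x_5$ is a direct verification.

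The principal obstacle is confirming nonsingularity of each $5 \times 5$ incidence matrix. For the cyclic case this is cleanest: the matrix equals $J - I - P$, where $J$ is the all-ones matrix and $P$ the cyclic shift, whose eigenvalues on the Fourier basis evaluate to $3$ and $-1 - \omega^{-k}$ for $k = 1, \ldots, 4$ (with $\omega$ a primitive fifth root of unity), and one computes the product to be $3$. For the three non-cyclic cases, each matrix contains a sparse row or column allowing a cofactor expansion that reduces the computation to a $3 \times 3$ or $4 \times 4$ determinant; direct evaluation gives small nonzero integers (each is $\pm 3$), closing the argument.
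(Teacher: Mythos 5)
Your proposal is correct and essentially reproduces the paper's own proof: the same reduction via Lemma~\ref{Ed} and Lemma~\ref{som}, the same enumeration into the four exponent types, and in fact the very same five triples in each case (your cyclic family $[5]\setminus\{i,i+1\}$ is the paper's Case~1, and your family for $(3,3,2,2,0)$ coincides with what the paper's Case~4 reduction produces, namely the four $3$-subsets of $\{1,2,3,4\}$ together with $x_rx_sx_5$). The only difference is that you verify the nonsingularity claims explicitly (eigenvalues of $J-I-P$, cofactor expansions yielding $\pm 3$), where the paper merely asserts linear independence.
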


\begin{proof}
Note that  by Lemma \ref{som},
$E_5=\{x_{i_1}^3x_{i_2}^3x_{i_3}^3x_{i_4}\; ; \; i_j\in[5]\}$, and by
Lemma~\ref{Ed}, $\mathbb{G}(A)\subseteq\KK[\Mon_S^*(4,10)\setminus
E_5]$.
Now, it remains to show that  any monomial
$m\in\Mon_S^*(4,10)\setminus E_5$ can be written as $\frac{g_1\cdots
g_5}{x_1\cdots x_5}$ with   $g_i\in V_{3,5}$ and
$\det(\Log(g_1,\ldots,g_5))\neq 0$. We distinguish the following
cases, after a suitable relabeling:

\textbf{Case 1:} For $m=x_1^2x_2^2x_3^2x_4^2x_5^2$, let
$g_1=x_1x_2x_3, g_2=x_2x_3x_4, g_3=x_3x_4x_5, g_4=x_1x_4x_5$ and
$g_5=x_1x_2x_5$. Then, the exponent vectors $\a_1=(1,1,1,0,0),
\a_2=(0,1,1,1,0), \a_3=(0,0,1,1,1), \a_4=(1,0,0,1,1)$ and
$\a_5=(1,1,0,0,1)$ are linearly independent and we are done.

\textbf{Case 2:} If $m=x_1^3x_2^2x_3^2x_4^2x_5$, then
$g_1=x_1x_2x_3, g_2=x_1x_2x_4, g_3=x_1x_2x_5, g_4=x_1x_3x_4$ and
$g_5=x_3x_4x_5$ will work.

\textbf{Case 3:} If $m=x_1^3x_2^3x_3^2x_4x_5$, then $g_1=x_1x_2x_3, g_2=x_1x_2x_4, g_3=x_1x_2x_5,
g_4=x_1x_3x_4$ and $g_5=x_2x_3x_5$ will work.
\end{proof}

\textbf{Case 4:} $|\supp(m)|=4$. Without loss of generality, we
may assume that $m=x_1^{a_1}\cdots x_4^{a_4}$. Since
$a_1+a_2+a_3+a_4=10$ and $a_i\leq 3$ for all $i\in[4]$,  there exist
 $1\leq r<s\leq 4$ with $a_r,a_s>1$.  We choose
$a_r,a_s$ among the largest  exponents. Let
$u=\frac{m}{x_rx_s}=x_1^{a'_1}\cdots x_4^{a'_4}\in \Mon_{S'}(4,8)$,
where $S'=\KK[x_1,\ldots,x_4]$ and $a'_i\leq 2$. Since
$a'_1+\cdots+a'_4=8$ and $a'_i\leq 2$ for all $i\in[4]$, it follows
that  $a'_i=2$  for all $i\in[4]$. Let $g_1=x_1x_2x_3,
g_2=x_2x_3x_4, g_3=x_1x_3x_4$ and $g_4=x_1x_2x_4$. So
$u=x_1^2x_2^2x_3^2x_4^2=\frac{g_1\cdots g_4}{x_1\cdots x_4}$ and
$\det(\Log(g_1,\ldots,g_4))\neq 0$. Now, let $g_5=x_rx_sx_5$. So
$(x_1\cdots x_5)m=x_1\cdots x_5x_rx_su=g_1\cdots g_5$. Since all the
entries of the last row of $\Log(g_1,\ldots,g_5)$ are zero, except
the last one, which is equal to $1$, we see that
$\Log(g_1,\ldots,g_5)$ is non-singular.

\begin{prop}\label{degree6}
Let $A=\KK[V_{3,6}]$  and $S=\KK[x_1,\ldots,x_6]$.  Then
$$\mathbb{G}(A)=\KK[\Mon_S^*(4,12)\setminus\{x_{i_1}^4x_{i_2}^4x_{i_3}^3x_{i_4}\; ; \;
i_j\in[6]\}].$$
\end{prop}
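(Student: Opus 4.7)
The proof should mirror Proposition~\ref{degree5}, establishing the stated equality by a two-sided inclusion. First, by Lemma~\ref{som} with $d=6$, the constraints $a_2+a_3=7$ and $1\leq a_2,a_3\leq 4$ force $\{a_2,a_3\}=\{3,4\}$, and hence
\[
E_6=\{x_{i_1}^4 x_{i_2}^4 x_{i_3}^3 x_{i_4}:i_j\in[6]\}.
\]
Combined with Lemma~\ref{Ed}, this yields $\mathbb{G}(A)\subseteq\KK[\Mon_S^*(4,12)\setminus E_6]$ immediately.

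For the reverse inclusion, I would show that each $m\in\Mon_S^*(4,12)\setminus E_6$ admits a representation $m=\frac{g_1\cdots g_6}{x_1\cdots x_6}$ with $g_i\in V_{3,6}$ and $\det\Log(g_1,\ldots,g_6)\neq 0$, splitting the analysis on $|\supp(m)|$. When $|\supp(m)|<6$, Proposition~\ref{degree5} supplies $\mathbb{G}(\KK[V_{3,5}])=\KK[\Mon_{S'}^*(4,10)\setminus E_5]$, so Lemma~\ref{supp<d} applied with $d=6$ yields $m\in\mathbb{G}(A)$ at once.

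The substantive work is therefore the full-support case $|\supp(m)|=6$. Writing $m=x_1^{a_1}\cdots x_6^{a_6}$ with $1\leq a_i\leq 4$ and $\sum a_i=12$, a routine enumeration shows that the sorted tuple $(a_1,\ldots,a_6)$ is, up to permutation of variables, one of
\[
(4,4,1,1,1,1),\ (4,3,2,1,1,1),\ (4,2,2,2,1,1),\ (3,3,3,1,1,1),\ (3,3,2,2,1,1),\ (3,2,2,2,2,1),\ (2,2,2,2,2,2).
\]
For each of these seven patterns I would exhibit six squarefree cubics $g_1,\ldots,g_6\in V_{3,6}$ with $g_1\cdots g_6=m\cdot x_1\cdots x_6$ and linearly independent exponent vectors. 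A natural starting point is a $3$-regular $3$-uniform hypergraph on six vertices realizing the symmetric shape $(2,2,2,2,2,2)$; for example the cyclic configuration $g_i=x_ix_{i+1}x_{i+3}$ with indices modulo $6$, whose $\Log$ matrix is a nonsingular circulant. The remaining six shapes can then be reached from this seed, or from related small cores, by local swaps that redistribute incidences between two variables while preserving the linear independence of the columns.

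The main obstacle is the case-by-case linear independence verification. As the proof of Lemma~\ref{Ed} illustrates, the typical failure mode is that two columns of $\Log(g_1,\ldots,g_6)$ coincide, so the cubics must be chosen to prevent any column repetition. I expect the tightest pattern to be $(4,4,1,1,1,1)$, which lies closest to $E_6$: two variables must each appear in five of the six cubics, so four of those cubics are forced to be the full set $\{x_{i_1}x_{i_2}x_{i_k}\}$, and the remaining four low-multiplicity variables must be distributed across the two residual slots without creating identical columns. Apart from this combinatorial bookkeeping, the argument is entirely parallel to that of Proposition~\ref{degree5}.
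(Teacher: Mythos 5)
Your strategy is exactly the paper's: the inclusion $\mathbb{G}(A)\subseteq\KK[\Mon_S^*(4,12)\setminus E_6]$ via Lemmas~\ref{som} and~\ref{Ed}, the reduction of $|\supp(m)|<6$ to Proposition~\ref{degree5} through Lemma~\ref{supp<d}, and the enumeration of full-support exponent patterns, which you get right (the seven tuples you list are precisely the partitions of $12$ into six parts between $1$ and $4$, and they match the paper's Cases 1--7). Your seed for $(2,2,2,2,2,2)$ is also sound: the matrix of $g_i=x_ix_{i+1}x_{i+3}$ (indices mod $6$) is a circulant with symbol $f(t)=1+t+t^3$, and $\prod_{j=0}^{5}f(\omega^j)\neq 0$ for $\omega$ a primitive sixth root of unity, so it is nonsingular --- a cleaner verification, in fact, than checking the paper's ad hoc sextuple by hand.

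The genuine gap is that for six of the seven patterns you never produce the witnesses. ``Local swaps that redistribute incidences while preserving the linear independence of the columns'' is a plan, not an argument: a swap that adjusts the row sums of $\Log$ can perfectly well create two identical columns (the exact degeneration exploited in the proof of Lemma~\ref{Ed}) or an otherwise singular matrix, and nothing in your proposal rules this out uniformly; there is no general lemma here, which is why the paper's proof consists precisely of exhibiting, for each of Cases 2--7, an explicit sextuple in $V_{3,6}$ with the prescribed row sums and nonzero determinant (e.g.\ for $(4,4,1,1,1,1)$ it takes $x_1x_2x_3,\,x_1x_2x_4,\,x_1x_2x_5,\,x_1x_2x_6,\,x_1x_3x_4,\,x_2x_5x_6$). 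Your structural analysis of that tightest case is correct --- since $x_1$ and $x_2$ each lie in five of the six cubics, exactly four cubics are forced to be $x_1x_2x_k$, $k=3,\dots,6$, and the remaining two split $\{x_3,\dots,x_6\}$ between $x_1$ and $x_2$ --- but even there you still owe the determinant check for one admissible completion. Until the six missing sextuples are written down and their $\Log$ matrices verified nonsingular, the reverse inclusion, which is the substantive half of the proposition, remains unproved.
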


\begin{proof}
Note that  by Lemma \ref{som},
$E_6=\{x_{i_1}^4x_{i_2}^4x_{i_3}^3x_{i_4}\; ; \;
    i_j\in[6]\}$, and by Lemma~\ref{Ed}, $\mathbb{G}(A)\subseteq\KK[\Mon_S^*(4,12)\setminus E_6]$.
Now, it remains to show that any
monomial
 $m\in\Mon_S^*(4,12)\setminus E_6$ can be written as $\frac{g_1\cdots g_6}{x_1\cdots x_6}$
with  $g_i\in V_{3,6}$ and $\det(\Log(g_1,\ldots,g_6))\neq 0$. The
case that $|\supp(m)|<6$,  follows by Lemma~\ref{supp<d}  and
Proposition \ref{degree5}. If $|\supp(m)|=6$, we distinguish the
following cases, after a suitable relabeling:

\textbf{Case 1:} For $m=x_1^2\cdots x_6^2$, let  $g_1=x_1x_2x_3,
g_2=x_1x_4x_5, g_3=x_1x_3x_6, g_4=x_2x_4x_6, g_5=x_2x_5x_6$ and
$g_6=x_3x_4x_5$. Then, the exponent vectors $\a_1=(1,1,1,0,0,0),
\a_2=(1,0,0,1,1,0), \a_3=(1,0,1,0,0,1), \a_4=(0,1,0,1,0,1),
\a_5=(0,1,0,0,1,1)$ and $\a_6=(0,0,1,1,1,0)$ are linearly
independent and we are done.

\textbf{Case 2:} If $m=x_1^3x_2^2x_3^2x_4^2x_5^2x_6$, then
$g_1=x_1x_2x_3, g_2=x_1x_3x_4, g_3=x_1x_4x_6, g_4=x_1x_5x_6,
g_5=x_2x_3x_5$ and $g_6=x_2x_4x_5$ will work.

\textbf{Case 3:} If $m=x_1^3x_2^3x_3^2x_4^2x_5x_6$, then
$g_1=x_1x_2x_4, g_2=x_1x_2x_5, g_3=x_1x_2x_6, g_4=x_1x_3x_4,
g_5=x_2x_3x_5$ and $g_6=x_3x_4x_6$ will work.

\textbf{Case 4:} If $m=x_1^3x_2^3x_3^3x_4x_5x_6$, then
$g_1=x_1x_2x_3, g_2=x_1x_2x_4, g_3=x_1x_2x_5, g_4=x_1x_3x_6,
g_5=x_2x_3x_6$ and $g_6=x_3x_4x_5$ will work.

\textbf{Case 5:} If $m=x_1^4x_2^2x_3^2x_4^2x_5x_6$,
then$g_1=x_1x_2x_3, g_2=x_1x_2x_4, g_3=x_1x_2x_5, g_4=x_1x_3x_4,
g_5=x_1x_4x_6$ and $g_6=x_3x_5x_6$ will work.

\textbf{Case 6:} If $m=x_1^4x_2^3x_3^2x_4x_5x_6$, then
$g_1=x_1x_2x_3, g_2=x_1x_2x_4, g_3=x_1x_2x_5, g_4=x_1x_2x_6,
g_5=x_1x_3x_4$ and $g_6=x_3x_5x_6$ will work.

\textbf{Case 7:} If $m=x_1^4x_2^4x_3x_4x_5x_6$, then $g_1=x_1x_2x_3,
g_2=x_1x_2x_4, g_3=x_1x_2x_5, g_4=x_1x_2x_6, g_5=x_1x_3x_4$ and
$g_6=x_2x_5x_6$ will work.
\end{proof}

\begin{prop}\label{degree7}
Let $A=\KK[V_{3,7}]$  and $S=\KK[x_1,\ldots,x_7]$.  Then
$$\mathbb{G}(A)=\KK[\Mon_S^*(4,14)\setminus\{x_{i_1}^5x_{i_2}^5x_{i_3}^3x_{i_4}\;,\, x_{i_1}^5x_{i_2}^4x_{i_3}^4x_{i_4}\; ;\;
i_j\in[7]\}].$$
\end{prop}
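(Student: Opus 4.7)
The plan is to mirror the two-step strategy used for Propositions \ref{degree5} and \ref{degree6}.

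\textbf{Step 1 (containment $\subseteq$).} I would start by computing $E_7$ from Lemma \ref{som}: any $u \in E_7$ has exponent profile $(5, a_2, a_3, 1)$ with $1 \leq a_2, a_3 \leq 5$ and $a_2 + a_3 = 8$, whose only unordered solutions are $\{5,3\}$ and $\{4,4\}$, exactly the two families displayed in the proposition. Combined with Lemma \ref{Ed}, this gives $\mathbb{G}(A) \subseteq \KK[\Mon_S^*(4,14) \setminus E_7]$.

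\textbf{Step 2 (containment $\supseteq$).} Fix $m \in \Mon_S^*(4,14) \setminus E_7$. I must exhibit $g_1, \ldots, g_7 \in V_{3,7}$ with $m = g_1 \cdots g_7 / (x_1 \cdots x_7)$ and $\det \Log(g_1, \ldots, g_7) \neq 0$. If $|\supp(m)| < 7$, Lemma \ref{supp<d}, fed with the now-available input $\mathbb{G}(\KK[V_{3,6}]) = \KK[\Mon_{S'}^*(4,12) \setminus E_6]$ from Proposition \ref{degree6}, finishes the argument. So the real work is the case $|\supp(m)| = 7$. Writing $m = x_1^{a_1} \cdots x_7^{a_7}$ with $\sum a_i = 14$ and $1 \leq a_i \leq 5$, subtracting $1$ from each exponent identifies the profile $(a_1, \ldots, a_7)$ (up to permutation) with a partition of $7$ into at most $7$ parts of size $\leq 4$. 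A short enumeration gives exactly eleven such profiles, from the extremal $(5,4,1,1,1,1,1)$ to the uniform $(2,2,2,2,2,2,2)$, and one must display seven squarefree cubics for each.

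The guiding principle, as in Cases 2--7 of Proposition \ref{degree6}, is that each $x_j$ must appear in exactly $a_j + 1$ of the chosen $g_i$, and the $7 \times 7$ incidence matrix must be non-singular. For the uniform profile, the Fano plane $\PP^2(\mathbb{F}_2)$ provides a canonical choice: its seven lines form seven triples in $[7]$ with each point on three lines, and a standard computation gives $\det \Log \cdot \det \Log^T = \det(2I + J) = 9 \cdot 2^6$, so the incidence matrix has determinant $\pm 24 \neq 0$. The remaining ten profiles should be handled by local modifications of this or of the $d=6$ constructions: for each large exponent $a_j$ one forces $x_j$ into $a_j+1$ of the $g_i$, and fills the remaining slots to preserve non-singularity.

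The main obstacle is not conceptual but combinatorial: each of the eleven profiles requires an ad-hoc construction with both a coverage condition and a non-vanishing determinant, and the profiles need to be dispatched individually. The skewed profiles such as $(5,4,1,1,1,1,1)$ are the trickiest, since six generators must contain $x_1$ and five must contain $x_2$, leaving very little freedom to secure linear independence of the exponent vectors.
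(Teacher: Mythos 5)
Your overall architecture is exactly the paper's: Lemma~\ref{som} to identify $E_7$ (your computation $a_2+a_3=8$ with $a_i\le 5$, giving the two families $\{5,3\}$ and $\{4,4\}$, is correct), Lemma~\ref{Ed} for the containment $\subseteq$, Lemma~\ref{supp<d} together with Proposition~\ref{degree6} for monomials of support less than $7$, and a case analysis over full-support exponent profiles. Your enumeration is also correct: there are exactly eleven profiles, in bijection with partitions of $7$ into at most $7$ parts of size at most $4$, and these match the paper's Cases 1--11 one for one. Your treatment of the uniform profile $(2,\ldots,2)$ via the Fano plane is valid and in fact a nicer certificate than the paper's: since $N N^T = 2I+J$ for the line-point incidence matrix $N$, one gets $\det(N)^2 = 9\cdot 2^6$, so $\det N = \pm 24\neq 0$ (the paper instead uses the circulant of consecutive triples $\{i,i+1,i+2\}$ and asserts independence directly).

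The genuine gap is that for ten of the eleven profiles you prove nothing: ``the remaining ten profiles should be handled by local modifications\dots and fills the remaining slots to preserve non-singularity'' is a statement of intent, not an argument. Existence of a valid configuration for each profile is precisely the mathematical content of the hard direction, and it is exactly what the paper supplies in Cases 2--11: an explicit list of seven squarefree cubics for each profile, with each $x_j$ in exactly $a_j+1$ of them (your coverage condition is the right one) and a nonsingular $\Log$ matrix. Nothing you write guarantees that a ``local modification'' of the Fano configuration or of a $d=6$ solution preserves nonsingularity, and the constraints are genuinely tight in the skewed cases --- for $(5,4,1,1,1,1,1)$ six of the seven triples must contain $x_1$ and five must contain $x_2$, so four triples are forced to be of the form $x_1x_2x_k$ and the determinant must still be checked; you flag this as the trickiest case but do not resolve it. As written, your proposal establishes the containment $\subseteq$, the non-full-support reduction, and one of eleven full-support cases. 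To complete it you must either exhibit the ten missing generator lists (each with a determinant verification), as the paper does, or prove a general lemma producing a nonsingular covering configuration for every admissible profile --- and the absence of such a lemma in the paper, which resorts to ad hoc lists, suggests it is not routine.
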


\begin{proof}
Note that  by Lemma \ref{som},
$E_7=\{x_{i_1}^5x_{i_2}^5x_{i_3}^3x_{i_4}\;,\,
x_{i_1}^5x_{i_2}^4x_{i_3}^4x_{i_4}\;;\; i_j\in[7]\}$, and by
Lemma~\ref{Ed}, $\mathbb{G}(A)\subseteq\KK[\Mon_S^*(4,12)\setminus
E_7]$. Now, it remains to show that any monomial
 $m\in\Mon_S^*(4,12)\setminus E_7$
can be written as $\frac{g_1\cdots g_7}{x_1\cdots x_7}$ with $g_i\in
V_{3,7}$ and $\det(\Log(g_1,\ldots,g_7))\neq 0$.
 The
case that $|\supp(m)|<7$,  follows by Lemma~\ref{supp<d} and
Proposition \ref{degree6}.  If $|\supp(m)|=7$, we distinguish the
following cases, after a suitable relabeling:

\textbf{Case 1:} For $m=x_1^2\cdots x_7^2$, let $g_1=x_1x_2x_3,
g_2=x_2x_3x_4, g_3=x_3x_4x_5, g_4=x_4x_5x_6, g_5=x_5x_6x_7,
g_6=x_1x_6x_7$ and $g_7=x_1x_2x_7$. Then, the exponent vectors
$\a_1=(1,1,1,0,0,0,0), \a_2=(0,1,1,1,0,0,0), \a_3=(0,0,1,1,1,0,0),
\a_4=(0,0,0,1,1,1,0), \a_5=(0,0,0,0,1,1,1), \a_6=(1,0,0,0,0,1,1)$
and $\a_7=(1,1,0,0,0,0,1)$ are linearly independent and we are done.

\textbf{Case 2:} If $m=x_1^3x_2^2x_3^2x_4^2x_5^2x_6^2x_7$, then
$g_1=x_1x_2x_3, g_2=x_1x_4x_5, g_3=x_1x_5x_6, g_4=x_1x_5x_7,
g_5=x_2x_3x_4, g_6=x_2x_4x_6$ and $g_7=x_3x_6x_7$ will work.

\textbf{Case 3:} If $m=x_1^3x_2^3x_3^2x_4^2x_5^2x_6x_7$, then
$g_1=x_1x_2x_3, g_2=x_1x_3x_4, g_3=x_1x_4x_5, g_4=x_1x_5x_6,
g_5=x_2x_3x_4, g_6=x_2x_5x_7$ and $g_7=x_2x_6x_7$ will work.

\textbf{Case 4:} If $m=x_1^3x_2^3x_3^3x_4^2x_5x_6x_7$, then
$g_1=x_1x_2x_3, g_2=x_1x_3x_4, g_3=x_1x_4x_6, g_4=x_1x_5x_6,
g_5=x_2x_3x_5, g_6=x_2x_3x_7$ and $g_7=x_2x_4x_7$ will work.

\textbf{Case 5:} If $m=x_1^4x_2^2x_3^2x_4^2x_5^2x_6x_7$, then
$g_1=x_1x_2x_3, g_2=x_1x_3x_4, g_3=x_1x_4x_5, g_4=x_1x_5x_6,
g_5=x_1x_6x_7, g_6=x_2x_3x_5$ and $g_7=x_2x_4x_7$ will work.

\textbf{Case 6:} If $m=x_1^4x_2^3x_3^2x_4^2x_5x_6x_7$, then
$g_1=x_1x_2x_3, g_2=x_1x_2x_4, g_3=x_1x_2x_5, g_4=x_1x_3x_6,
g_5=x_1x_4x_7, g_6=x_2x_4x_7, g_7=x_3x_5x_6$.

\textbf{Case 7:} If $m=x_1^4x_2^3x_3^3x_4x_5x_6x_7$, then
$g_1=x_1x_2x_3, g_2=x_1x_3x_4, g_3=x_1x_4x_6, g_4=x_1x_5x_6,
g_5=x_1x_2x_7, g_6=x_2x_3x_5$ and $g_7=x_2x_3x_7$ will work.

\textbf{Case 8:} If $m=x_1^4x_2^4x_3^2x_4x_5x_6x_7$, then
$g_1=x_1x_2x_3, g_2=x_1x_2x_4, g_3=x_2x_3x_5, g_4=x_1x_2x_6,
g_5=x_1x_3x_4, g_6=x_1x_5x_7, g_7=x_2x_6x_7$

\textbf{Case 9:} If $m=x_1^5x_2^2x_3^2x_4^2x_5x_6x_7$, then
$g_1=x_1x_2x_3, g_2=x_1x_2x_4, g_3=x_1x_3x_4, g_4=x_1x_4x_5,
g_5=x_1x_5x_6, g_6=x_1x_6x_7$ and $g_7=x_2x_3x_7$ will work.

\textbf{Case 10:} If $m=x_1^5x_2^3x_3^2x_4x_5x_6x_7$, then
$g_1=x_1x_2x_3, g_2=x_1x_2x_4, g_3=x_1x_2x_5, g_4=x_1x_3x_4,
g_5=x_1x_3x_7, g_6=x_1x_5x_6$ and $g_7=x_2x_6x_7$ will work.

\textbf{Case 11:} If $m=x_1^5x_2^4x_3x_4x_5x_6x_7$, then
$g_1=x_1x_2x_3, g_2=x_1x_2x_4, g_3=x_1x_2x_5, g_4=x_1x_2x_6,
g_5=x_1x_3x_4, g_6=x_1x_5x_7$ and $g_7=x_2x_6x_7$ will work.
\end{proof}

Summing up the above results, we have the following theorem.
\begin{thm} \label{cor}
Let $d\in\{5,6,7\}$. Then
$\GG(\KK[V_{3,d}])=\KK[\Mon^*_S(4,2d)\setminus E_d].$
\end{thm}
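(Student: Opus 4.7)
The plan is to read Theorem~\ref{cor} as the compact consolidation of the three explicit computations already carried out in Propositions~\ref{degree5}, \ref{degree6}, and \ref{degree7}. Each of those propositions states precisely $\GG(\KK[V_{3,d}]) = \KK[\Mon^*_S(4,2d)\setminus E_d]$ for its specific value of $d$, once one uses Lemma~\ref{som} to recognize the exceptional sets $\{x_{i_1}^3x_{i_2}^3x_{i_3}^3x_{i_4}\}$, $\{x_{i_1}^4x_{i_2}^4x_{i_3}^3x_{i_4}\}$, and $\{x_{i_1}^5x_{i_2}^5x_{i_3}^3x_{i_4},\,x_{i_1}^5x_{i_2}^4x_{i_3}^4x_{i_4}\}$ as the sets $E_5, E_6, E_7$. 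So the only task remaining is to observe that $d\in\{5,6,7\}$ is exhausted by these three cases, and write the proof as a one-line appeal to them.

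If the three propositions were not already available, the strategy for each $d$ would follow the template they exhibit. The inclusion $\GG(\KK[V_{3,d}]) \subseteq \KK[\Mon^*_S(4,2d)\setminus E_d]$ comes for free from Lemma~\ref{Ed}. For the reverse inclusion one takes an arbitrary $m \in \Mon^*_S(4,2d)\setminus E_d$ and exhibits squarefree degree-$3$ monomials $g_1,\ldots,g_d$ with $m=(g_1\cdots g_d)/(x_1\cdots x_d)$ and $\det\Log(g_1,\ldots,g_d)\neq 0$, splitting by $|\supp(m)|$: when $|\supp(m)|<d$, reduce to dimension $d-1$ via Lemma~\ref{supp<d} (and the previous proposition); when $|\supp(m)|=d$, enumerate the partitions of $2d$ into $d$ parts each at most $d-2$, excluding those with shape corresponding to $E_d$, and handle each pattern by hand.

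The hard part in any such unified attack would be precisely the full-support case: there is no closed-form construction of the decomposition $g_1,\ldots,g_d$, so each exponent pattern needs a tailor-made choice of squarefree degree-$3$ monomials whose exponent vectors turn out linearly independent. This ad hoc enumeration is exactly what Propositions~\ref{degree5}--\ref{degree7} do, running over the finitely many exponent sequences in each small dimension. Consequently, once those three propositions are in hand, Theorem~\ref{cor} is obtained by simply quoting them in a single sentence.
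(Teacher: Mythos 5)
Your proposal is correct and matches the paper exactly: the paper introduces the theorem with the sentence ``Summing up the above results, we have the following theorem,'' i.e.\ its proof is precisely the one-line appeal to Propositions~\ref{degree5}, \ref{degree6}, and \ref{degree7} (with Lemma~\ref{som} identifying the exceptional sets as $E_5$, $E_6$, $E_7$) that you describe. Your sketch of the internal structure of those propositions --- Lemma~\ref{Ed} for the inclusion $\subseteq$, Lemma~\ref{supp<d} for monomials of non-full support, and case-by-case constructions of $g_1,\ldots,g_d$ for the full-support exponent patterns --- also accurately reflects how the paper argues.
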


 For a monomial ideal $I\subset S$,
 we denote the unique minimal set of monomial generators of  $I$ by $G(I)$. For a monomial $u=x_1^{a_1}\cdots x_n^{a_n}\in S$,
  we set  $\deg_{x_i}(u):=a_i$ for all $i=1, \ldots, n$.
 Let $I$ be generated in a single degree. Then $I$ is said to be a {\em polymatroidal} ideal
 if for any $u,v\in G(I)$ with $\deg_{x_i}(u)> \deg_{x_i}(v)$,
there exists an index $j$ with $\deg_{x_j}(u)< \deg_{x_j}(v)$ such
that $x_j(u/x_i)\in I$. Equivalently,  a monomial ideal $I$ in $S$
is called a
 {\em polymatroidal} ideal, if there exists a set of bases $\mathcal{B}\subset\mathbb{Z}^n$ of a discrete polymatroid, such that
$G(I)=\{{\bf x}^{\bf a} : {\bf a} \in \mathcal{B}\}$. By
\cite[Corollary 6.2]{HH}, we have that the base ring of a discrete
polymatroid is normal and
 Cohen-Macaulay.
The polymatroidal ideals  have been discussed in various articles
with both algebraic and
 combinatorial points of view. An overview of discrete polymatroids
 and polymatroidal ideals is provided in \cite[Chapter 12]{HHbook}.

By \cite[Proposition 3.1]{HJN} the monomial ideal generated by
$\Mon_S(t,r)$ is polymatroidal and  it is easy to see that the
monomial ideal generated by  $\Mon_S^*(t,r)$ is also polymatroidal.

\begin{rem}
Let $A=\KK[V_{2,4}]$ and $S=\KK[x_1,\ldots,x_4]$.
\begin{enumerate}
\item[(a)] By \cite[Theorem 3.2(a)]{HJN},
$\mathbb{G}(A)=\KK[\Mon_S(3,4)\setminus\{x_1x_2x_3x_4\}]$. Let
$u=x_2^2x_3x_4, v=x_1^2x_2x_3\in
\Mon_S(3,4)\setminus\{x_1x_2x_3x_4\}$. Then
$(u/x_2)x_1=x_1x_2x_3x_4\not\in
\Mon_S(3,4)\setminus\{x_1x_2x_3x_4\}$. Hence,  the monomial ideal
generated by $\Mon_S(3,4)\setminus\{x_1x_2x_3x_4\}$ is not
polymatroidal.
\item[(b)] By \cite[Proposition
3.1]{HJN}, \cite[Theorem 3.2(b)]{HJN} and  \cite[Corollary
6.2]{HH}, we have that the monomial ideal generated by
$\Mon_S(3,d)$ is polymatroidal for $d\geq 5$ and $\KK$-algebra
$\mathbb{G}(A)=\KK[\Mon_S(3,d)]$ is normal and Cohen-Macaulay, for
$d\geq 5$.
\end{enumerate}
\end{rem}

 \begin{thm} \label{poly} Let
$d\geq 5$. Then, the monomial ideal generated by
$\Mon_S^*(4,2d)\setminus E_d$ is polymatroidal.
\end{thm}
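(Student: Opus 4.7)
The plan is to verify the exchange property of polymatroidal ideals directly for $G(I):=\Mon^*_S(4,2d)\setminus E_d$: given $u,v\in G(I)$ and $i\in[d]$ with $\deg_{x_i}(u)>\deg_{x_i}(v)$, I must produce an index $j$ with $\deg_{x_j}(u)<\deg_{x_j}(v)$ such that $w:=x_j(u/x_i)$ still lies in $G(I)$. The opening observation, which localizes where the difficulty can possibly be, is that for \emph{any} such exchange index $j$, the monomial $w$ automatically belongs to $\Mon^*_S(4,2d)$: the degree is preserved, $\deg_{x_j}(w)\leq \deg_{x_j}(v)\leq d-2$ and $\deg_{x_i}(w)\leq d-3$, and $|\supp(w)|\geq 4$ because the only way to fall below support $4$ would require $|\supp(u)|=4$ with $\deg_{x_i}(u)=1$, impossible under $u\notin E_d$ (which forces every exponent on $\supp(u)$ to be at least $2$). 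Thus the whole task reduces to picking $j$ so that $w\notin E_d$.

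I would then dispose of the routine configurations $|\supp(u)|\geq 6$, or $|\supp(u)|=5$ with $\deg_{x_i}(u)\geq 2$, or $|\supp(u)|=4$ with $\deg_{x_i}(u)\geq 3$: in each, a direct check shows that every $j$ in the (nonempty) exchange set produces $w$ of support size $\geq 5$, or of support size exactly $4$ with every exponent $\geq 2$, and hence $w\notin E_d$. This leaves two delicate configurations: (I) $|\supp(u)|=5$ with $\deg_{x_i}(u)=1$, and (II) $|\supp(u)|=4$ with $\deg_{x_i}(u)=2$. Case (II) is the shorter of the two: since $u\notin E_d$ all four exponents on $\supp(u)$ are $\geq 2$, so $\supp(v)\subseteq\supp(u)$ together with $v\notin E_d$ and $|\supp(v)|\geq 4$ would force $\supp(v)=\supp(u)$ with every exponent of $v$ at least $2$, contradicting $\deg_{x_i}(v)<2$; thus some $j\notin\supp(u)$ lies in the exchange set, yielding $|\supp(w)|=5$.

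The main obstacle is case (I). Here the key auxiliary fact is that the set $T:=\{k\in\supp(u)\setminus\{i\}:\deg_{x_k}(u)=1\}$ satisfies $|T|\leq 1$ for every $d\geq 5$: from
\[
2d=1+|T|+\sum_{k\in\supp(u)\setminus(T\cup\{i\})}\deg_{x_k}(u)\leq 1+|T|+(4-|T|)(d-2)
\]
one rearranges to $|T|\leq (2d-7)/(d-3)=2-1/(d-3)<2$. Given this, if $|T|=0$ then any exchange index works, exactly as in the routine cases. If $T=\{k\}$, I first try $j=k$: the resulting $w$ has support $\supp(u)\setminus\{i\}$, with $\deg_{x_k}(w)=2$ and all other exponents unchanged and $\geq 2$, so $w\notin E_d$, provided $\deg_{x_k}(v)\geq 2$ (so that $k$ actually belongs to the exchange set). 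In the remaining subcase $\deg_{x_k}(v)\leq 1$, the argument from case (II) adapts: if $\supp(v)\subseteq\supp(u)$ then $i\notin\supp(v)$ forces $\supp(v)=\supp(u)\setminus\{i\}$, and $v\notin E_d$ gives $\deg_{x_k}(v)\geq 2$, contradicting the subcase hypothesis; hence some $j\notin\supp(u)$ sits in the exchange set, and then $|\supp(w)|=5$ puts $w$ outside $E_d$. Exhausting the cases establishes the exchange property and thus the polymatroidal structure of $I$.
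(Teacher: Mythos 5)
Your proof is correct, and it reaches the same combinatorial kernel as the paper's argument, but it is organized differently and is more self-contained. The paper first invokes the polymatroidality of the ideal generated by $\Mon_S^*(4,2d)$ (asserted just before the theorem as ``easy to see'' but not proved there) to obtain an exchange index $j$ with $(u/x_i)x_j\in\Mon_S^*(4,2d)$, and then repairs this choice when $(u/x_i)x_j\in E_d$, splitting into the cases $|\supp(u)|=4$ (its Case a) and $|\supp(u)|=5$ with $\deg_{x_i}(u)=1$ (its Case b). You instead observe at the outset that for $u,v\in\Mon_S^*(4,2d)\setminus E_d$ \emph{every} exchange index automatically keeps $w=x_j(u/x_i)$ inside $\Mon_S^*(4,2d)$ --- the cap $\deg_{x_j}(w)\le\deg_{x_j}(v)\le d-2$ is inherited from $v$, and the support bound uses $u\notin E_d$ --- so the only issue is avoiding $E_d$; this removes the dependence on the unproved auxiliary claim. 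Your two delicate configurations (I) and (II) are exactly the paper's Cases b and a, and your bound $|T|\le 1$, derived from the exponent cap $d-2$ against total degree $2d$, is the same arithmetic as the paper's contradiction $a_k+a_l=2d-3\le 2(d-2)$ in its Case b; likewise, both proofs fall back on showing $\supp(v)\not\subseteq\supp(u)$ (the paper's ``$b_k\neq 0$ for some $k$'' step) to locate a support-enlarging exchange index when the natural one fails. What your route buys is self-containedness and a transparent exhaustiveness check via the case split on the pair $(|\supp(u)|,\deg_{x_i}(u))$; what the paper's route buys is brevity, at the cost of leaning on the black-box polymatroidality of $\Mon_S^*(4,2d)$.
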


\begin{proof}
Let $M:=\Mon_S^*(4,2d)\setminus E_d$ and $u=x_1^{a_1}\cdots
x_d^{a_d}, v=x_1^{b_1}\cdots x_d^{b_d}\in M$ and
$\deg_{x_i}(u)>\deg_{x_i}(v)$. Since $\Mon_S^*(4,2d)$ is
polymatroidal, it follows that there exists index $j\in [d]$ such
that $\deg_{x_j}(u)<\deg_{x_j}(v)$ and $(u/x_i)x_j\in
\Mon_S^*(4,2d)$. For convenience, we assume that $i=1$ and $j=2$.

 If $(u/x_1)x_2\not\in E_d$, then there is nothing to prove.
Now, let $(u/x_1)x_2\in E_d$. We consider the following cases:

Case a:  $|\supp(u/x_1)x_2|=4$ and  $|\supp(u)|=4$. Hence,
since $u\not\in E_d$, we have that  $a_1>1$ and  $a_2>0$. We may
assume that $u=x_1^{a_1}x_2^{a_2}x_3^{a_3}x_4^{a_4}$. If for some
$k\in\{5,\ldots,d\}$, $b_k\neq 0$, then $|\supp(u/x_1)x_k|\geq 5$,
and so $(u/x_1)x_k\in M$. Note that $\deg_{x_k}(u)<\deg_{x_k}(v)$.
Now, let $b_5=\cdots=b_d=0$. Since $(u/x_1)x_2\in E_d$, $a_2>0$ and
$u\not\in E_d$, it follows that $a_1=2$.
 Hence, since $|\supp(v)|=4$, $b_5=\cdots=b_d=0$ and
$b_1<a_1=2$, it follows that $b_1=1$, which implies that $v\in E_d$,
a contradiction.

Case b:  $|\supp(u/x_1)x_2|=4$ and $|\supp(u)|=5$. Hence $a_1=1$ and
$a_2>0$. We may assume that $u=x_1x_2^{a_2}x_3^{a_3}x_4^{a_4}
x_5^{a_5}$. If for some $k\in\{6,\ldots,d\}$, $b_k\neq 0$, then
$|\supp(u/x_1)x_k|\geq 5$, and so $(u/x_1)x_k\in M$. Note that
$\deg_{x_k}(u)<\deg_{x_k}(v)$. Now, let $b_6=\cdots=b_d=0$. Since
$b_1<a_1=1$, we have that $b_1=0$.  So $|\supp(v)|=4$. Hence, since
$v\not\in E_d$, we have that $b_i\geq 2$ for $i=2,3,4,5$.  On the
other hand, since $(u/x_1)x_2\in E_d$, it follows that $a_3=1$ or
$a_4=1$ or $a_5=1$. We may assume that $a_3=1$. Now, we claim that
$(u/x_1)x_3\not\in E_d$, and hence $(u/x_1)x_3\in M$.  Assume for
the sake of contradiction, that  $(u/x_1)x_3\in E_d$. Then
$(u/x_1)x_3= x_2^{a_2}x_3^2x_4^{a_4}x_5^{a_5}$, with $a_j=1$ for
some $j\in\{2,4,5\}$ and $a_k+a_l=2d-3$, where
$\{k,l\}=\{2,4,5\}\setminus\{j\}$. Since $a_k,a_l\leq d-2$, we have
$2d-3\leq 2d-4$, a contradiction. So $(u/x_1)x_3\not\in E_d$. Note
that $\deg_{x_3}(v)\geq 2>1=\deg_{x_3}(u).$
\end{proof}

\begin{prop}
Let $d\in\{5,6,7\}$. Then, the $\KK$-algebra
$\mathbb{G}(\KK[V_{3,d}])$ is normal and Cohen-Macaulay.
\end{prop}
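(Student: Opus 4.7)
The plan is to combine the two main results of the paper, Theorem \ref{cor} and Theorem \ref{poly}, together with the standard Herzog--Hibi result on base rings of discrete polymatroids (cited in the paper as \cite[Corollary~6.2]{HH}). There is essentially no new content to supply; the proposition is a direct corollary of machinery already assembled.

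First, I would invoke Theorem \ref{cor} to identify the Gauss algebra as the toric $\KK$-subalgebra
\[
\GG(\KK[V_{3,d}]) \;=\; \KK[\Mon^*_S(4,2d)\setminus E_d]
\]
for each $d\in\{5,6,7\}$. This rewrites the question about $\GG(\KK[V_{3,d}])$ as a question about the toric ring generated in a single degree by the monomials in $\Mon^*_S(4,2d)\setminus E_d$.

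Next, I would apply Theorem \ref{poly}, which says that the monomial ideal $I$ generated by $\Mon^*_S(4,2d)\setminus E_d$ is polymatroidal. By the definition recalled just before Theorem \ref{poly}, this means that the set of exponent vectors of $G(I) = \Mon^*_S(4,2d)\setminus E_d$ is exactly the set of bases of a discrete polymatroid $\mathcal{B}\subset \mathbb{Z}^d$. Consequently, the $\KK$-algebra $\KK[\Mon^*_S(4,2d)\setminus E_d]$ is precisely the base ring of this discrete polymatroid.

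Finally, I would cite \cite[Corollary~6.2]{HH} to conclude that the base ring of any discrete polymatroid is a normal Cohen--Macaulay domain; hence $\GG(\KK[V_{3,d}])$ is normal and Cohen--Macaulay for $d\in\{5,6,7\}$. There is no genuine obstacle here: the hard combinatorial work lies in Theorems \ref{cor} and \ref{poly}, and the proposition is simply the algebraic consequence obtained by quoting the Herzog--Hibi theorem.
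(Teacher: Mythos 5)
Your proposal matches the paper's proof exactly: the paper also derives the proposition by combining Theorem~\ref{cor}, Theorem~\ref{poly}, and \cite[Corollary~6.2]{HH}, with no additional argument needed. Your slightly more detailed unwinding (identifying the exponent vectors as the bases of a discrete polymatroid so that the algebra is its base ring) is just a correct expansion of the same one-line citation chain.
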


\begin{proof}
 Theorem~\ref{cor},  Theorem~\ref{poly} and  \cite[Corollary 6.2]{HH} yield the desired result.
\end{proof}

Based on our evidence, we expect that
$\GG(\KK[V_{3,d}])=\KK[\Mon^*_S(4,2d)\setminus E_d]$, for all $d\geq
5$. For small dimensions  $d\in\{5,6,7\}$, we proved in
Theorem~\ref{cor}.  Lemma~\ref{supp<d}, offers an induction process,
solving the problem for monomials who are not full support. So, the
open challenging part is to check the full support monomials.
However, we leave the higher cases $d\geq 8$, as an open question.
\begin{qu}
Let  $d\geq8$. Is $\GG(\KK[V_{3,d}])=\KK[\Mon^*_S(4,2d)\setminus
E_d]?$
\end{qu}

 If we have positive answer for the above question, then it
follows by Theorem~\ref{poly} and  \cite[Corollary 6.2]{HH}
that $\mathbb{G}(\KK[V_{3,d}])$ is normal and Cohen-Macaulay for
$d\geq 8$.

\bigskip

\paragraph{\bf Data Availability} Data sharing is not applicable to this article as no new data were created
or analyzed in this study.


\end{document}